\DeclareMathAlphabet{\mathcalligra}{T1}{calligra}{c}{h}
\providecommand{\U}[1]{\protect\rule{.1in}{.1in}}
\newtheorem{theorem}{Theorem}[section]
\newtheorem{proposition}[theorem]{Proposition}
\newtheorem{corollary}[theorem]{Corollary}
\let\oldremark\remark
\renewcommand{\remark}{\oldremark\normalfont}
\let\oldexample\example
\renewcommand{\example}{\oldexample\normalfont}
\let\oldexamples\examples
\renewcommand{\examples}{\oldexamples\normalfont}
\def\<{{\langle}}
\def\>{{\rangle}}
\def\bea{\begin{eqnarray*} }
\def\eea{\end{eqnarray*} }
\def\be{\begin{equation} }
\def\ee{\end{equation} }
\def\qed{\ifhmode\unskip\nobreak\fi\ifmmode\ifinner
\else\hskip5 pt \fi\fi\hbox{\hskip5 pt \vrule width4 pt
height6 pt  depth1.5 pt \hskip 1pt }}
\DeclareMathOperator*{\supp}{supp}
\DeclareMathOperator*{\grad}{grad}
\begin{document}

\title{Positive harmonic functions on groups and covering spaces}
\author{Panagiotis Polymerakis}
\date{}

\maketitle

\renewcommand{\thefootnote}{\fnsymbol{footnote}}
\footnotetext{\emph{Date:} \today} 
\renewcommand{\thefootnote}{\arabic{footnote}}

\renewcommand{\thefootnote}{\fnsymbol{footnote}}
\footnotetext{\emph{2010 Mathematics Subject Classification.} 53C99, 58J65, 60G50.}
\renewcommand{\thefootnote}{\arabic{footnote}}

\renewcommand{\thefootnote}{\fnsymbol{footnote}}
\footnotetext{\emph{Key words and phrases.} Positive harmonic functions, strong Liouville property, Riemannian covering, volume growth, exponential growth, finitely generated group.}
\renewcommand{\thefootnote}{\arabic{footnote}}

\begin{abstract}
We show that if $p \colon M \to N$ is a normal Riemannian covering, with $N$ closed, and $M$ has exponential volume growth, then there are non-constant, positive harmonic functions on $M$. This was conjectured by Lyons and Sullivan in \cite{LS}.
\end{abstract}

\section{Introduction}

An interesting problem in Riemannian geometry is the investigation of relations between
the fundamental group of a closed manifold and the geometry of its universal covering
space. According to a seminal result of Milnor \cite{M1}, the growth rate of the fundamental group and the volume growth rate of the universal covering space coincide. Another connection between the fundamental group and a more analytic aspect of the universal covering space has been established by Brooks \cite{Brooks}. He showed that the fundamental group is amenable if and only if the bottom of the spectrum of the Laplacian on the universal covering space is zero.

In this direction, Lyons and Sullivan \cite{LS} worked on the Liouville and the strong
Liouville property on the covering space; that is, the existence of non-constant, bounded
or positive harmonic functions on the covering space. It should be emphasized that it
is not known if these properties depend only on the topology of the base manifold, or if
the Riemannian metric plays a role. Following earlier work of Furstenberg, in \cite{LS}, they constructed a discretization of the Brownian motion on the covering space. Their method was modified and extended in \cite{BL, BP}. In particular, according to \cite[Theorems A, C]{BP}, the cones of positive harmonic functions, and the spaces of bounded harmonic functions, respectively, on the covering space and the group (with respect to a symmetric probability measure, whose support is the whole group) are isomorphic. Therefore, it suffices to study the validity of the Liouville and the strong Liouville property on groups. Although one would expect this problem to be simpler, it is quite complicated and these properties are far from being comprehended completely. However, the Lyons-Sullivan discretization turned out to be quite fruitful.

To set the stage, let $p \colon M \to N$ be a normal Riemannian covering of a closed manifold, with deck transformation group $\Gamma$. Lyons and Sullivan \cite[Theorem 3]{LS} showed that if $\Gamma$ is non-amenable, then there exist non-constant, bounded harmonic functions on $M$. The converse does not hold even if $M$ is the universal covering space of $N$ (cf. \cite[Theorem 5.2]{Erschler}). However, there are some results in the converse direction. More precisely, Kaimanovich \cite{K} proved that if $\Gamma$ has subexponential growth, or $\Gamma$ is polycyclic (that is, solvable and any subgroup of $\Gamma$ is finitely generated), then any bounded harmonic function on $M$ is constant. About the strong Liouville property, Lyons and Sullivan \cite[Theorem 1]{LS} showed that if $\Gamma$ is virtually nilpotent, then any positive harmonic function on $M$ is constant. It should be noticed that according to a celebrated result of Gromov \cite{Gromov}, a finitely generated group is virtually nilpotent if and only if it is of polynomial growth.

In \cite[p. 305]{LS}, Lyons and Sullivan conjectured that $\Gamma$ is of exponential growth if and only if $M$ admits non-constant, positive harmonic functions. This was proved in \cite{BBE,BE}, under the assumption that $\Gamma$ is linear, that is, a closed subgroup of $\text{GL}_{n}(\mathbb{R})$, for some $n \in \mathbb{N}$. It is noteworthy that linear groups have either polynomial growth (and therefore are virtually nilpotent) or exponential growth. Hence, the main point of \cite{BBE, BE} is that if $\Gamma$
is a linear group of exponential growth, then $M$ admits non-constant, positive harmonic
functions. In this paper, we show that this holds, without the assumption that $\Gamma$ is linear, in the following:

\begin{theorem}\label{theorem}
Let $p \colon M \to N$ be a normal Riemannian covering, with $N$ closed. If the deck transformation group of the covering has exponential growth, then there are non-constant, positive harmonic functions on $M$.
\end{theorem}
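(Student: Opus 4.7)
The natural starting point is to reduce the problem from the manifold $M$ to the deck transformation group $\Gamma$. By the Lyons--Sullivan discretization, together with the isomorphism of cones proved in \cite{BP}, the cone of positive harmonic functions on $M$ is identified with the cone of positive $\mu$-harmonic functions on $\Gamma$ for a symmetric probability measure $\mu$ of full support on $\Gamma$ coming from the discretization. Thus the theorem reduces to the following group-theoretic statement: if $\Gamma$ is finitely generated of exponential growth and $\mu$ is such a measure, then $(\Gamma,\mu)$ admits a non-constant positive harmonic function.

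I would then split on amenability of $\Gamma$. If $\Gamma$ is non-amenable, Lyons--Sullivan \cite[Theorem 3]{LS} already produces a non-constant bounded (hence positive) harmonic function, and we are done. The real case is therefore that $\Gamma$ is amenable of exponential growth; when $\Gamma$ is linear this was settled in \cite{BBE, BE} through explicit use of the character variety of $\Gamma$, so the new content has to cover non-linear amenable groups of exponential growth, where no such linear structure is available.

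For this remaining case, the plan is to exploit exponential growth through a Martin boundary construction. Since exponential growth forces the $\mu$-random walk to be transient (by Varopoulos's recurrence criterion), the Green function $G(g,h)=\sum_{n\ge 0}\mu^{*n}(g^{-1}h)$ is finite for $g\ne h$, and the Martin kernel $K(g,h)=G(g,h)/G(e,h)$ is well-defined. Positive $\mu$-harmonic functions then arise as limits $K(g,\xi)=\lim_n K(g,h_n)$ along sequences $h_n\to\infty$ in $\Gamma$; the task is to choose the sequence so that $K(\cdot,\xi)$ is not identically $1$. The main obstacle is precisely this non-triviality: on amenable groups, Choquet--Deny-type phenomena can collapse the minimal Martin boundary to a point, so exponential growth has to be invoked in a quantitative, rather than merely structural, way. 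A promising route is to compactify $\Gamma$ by horofunctions in a word metric, produce a $\mu$-stationary probability measure on the horofunction boundary by compactness (with positive linear drift supplied by Guivarc'h's subadditive argument from exponential growth), and extract from this stationary measure a Busemann-type cocycle which, after tuning a spectral parameter, exponentiates to a positive $\mu$-harmonic function on $\Gamma$. The delicate point, absent in the linear setting of \cite{BBE, BE}, is that one cannot fall back on a Tits-alternative-style description of $\Gamma$, so the construction must be carried out intrinsically in the Cayley graph and in the random walk itself, with exponential volume growth playing the role previously played by the presence of a free subsemigroup inside a linear group.
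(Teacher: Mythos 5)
Your reduction to the group level via the Lyons--Sullivan discretization and \cite[Theorem A]{BP} is sound and matches the second of the paper's two proofs (the paper also gives a direct analytic proof on $M$ that avoids the discretization entirely, working with exit measures of Brownian motion from unions of translates of a fundamental domain). The amenable/non-amenable split is harmless but unnecessary. The genuine gap is in the only step that carries real content: producing a non-constant positive $\mu$-harmonic function on an amenable group of exponential growth. Your plan --- take a $\mu$-stationary measure on the horofunction compactification, extract a Busemann cocycle, and ``after tuning a spectral parameter'' exponentiate it --- does not work as stated. For $e^{-\lambda b_\xi}$ to be $\mu$-harmonic one needs $\sum_h \mu(h)\,e^{-\lambda(b_\xi(xh)-b_\xi(x))}=1$ for every $x$, and a horofunction of a word metric has no reason to satisfy such an exact eigenvalue identity on a general amenable group; stationarity of the boundary measure yields harmonicity of Poisson integrals of boundary functions, not of exponentials of individual horofunctions, and positive linear drift by itself does not rule out a trivial minimal Martin boundary. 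This is precisely the step that was open for non-linear amenable groups, so it cannot be waved through.

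What the paper actually does (Theorem \ref{discrete theorem}) is a quantitative contrapositive that bypasses any boundary construction. Assume every positive $\mu$-harmonic function is constant. Then (Proposition \ref{aux}) the Martin kernels $K_y(x,z)$ tend to $1$ uniformly in $z$ outside large finite sets. Writing a word $x=h_1\cdots h_n$ in a finite symmetric generating set and telescoping the Green function,
\[
g(e,x)=\prod_{i=0}^{n-1}K_{h_{i+1}}(e,x_i^{-1}x)\, g(e,e),
\]
all but the last $n_0$ factors are at least $1-\delta$, so $d_g(e,x)\leq -(n-n_0)\ln(1-\delta)+O(1)$ and the word ball $W_n$ sits inside a Green-metric ball of radius roughly $-n\ln(1-\delta)$. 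The universal bound $\limsup_{r} r^{-1}\ln|B_g(r)|\leq 1$ of \cite[Proposition 3.1]{MR2408585} then forces $\limsup_n n^{-1}\ln|W_n|\leq -\ln(1-\delta)$ for every $\delta>0$, i.e.\ subexponential growth, a contradiction. This telescoping-plus-Green-metric-volume argument (or its Brownian-motion analogue with exit measures in the paper's first proof) is the missing idea in your proposal.
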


This theorem (and a more general version of it) follows from the Lyons-Sullivan discretization and our main result in the context of discrete groups. In this setting, it was proved in \cite{BE} that if $\Gamma$ is a linear group of exponential growth, then it admits non-constant, positive $\mu$-harmonic functions for any probability measure $\mu$ satisfying certain assumptions. Recently, Amir and Kozma \cite{AK} showed that if $\Gamma$ has exponential growth, then there are non-constant, positive $\mu$-harmonic functions on $\Gamma$ for any non-degenerate probability measure $\mu$ with finite support. It is important that this is not sufficient to establish Theorem \ref{theorem} by exploiting the Lyons-Sullivan discretization. Indeed, this result involves measures with finite support, while from the Lyons-Sullivan discretization one obtains measures whose support is the whole group. 

In this paper, we focus on the triviality of the Martin boundary rather than the validity of the strong Liouville property on the deck transformation group. This is natural for our purposes, keeping in mind that if the base manifold is recurrent, then non-triviality of the Martin boundary of the deck transformation group (independently from the validity of the strong Liouville property on it) implies the existence of non-constant, positive harmonic functions on the covering space. Our main result in the group-theoretic setting is the following:

\begin{theorem}\label{discrete theorem}
Let $\Gamma$ be a finitely generated group of exponential growth. Then the Martin boundary of $\Gamma$ with respect to any non-degenerate probability measure $\mu$ on $\Gamma$ is non-trivial.
\end{theorem}

It is worth to point out that if $\mu$ is a non-degenerate probability measure on $\Gamma$ such that the $\mu$-random walk on $\Gamma$ is transient, then the Martin boundary of $\Gamma$ with respect to $\mu$ consists of positive $\mu$-harmonic functions, provided that $\mu$ has finite support, or more generally, superexponential moment (cf. for instance \cite[Lemma 7.1]{GGPY}). Therefore, Theorem \ref{discrete theorem} yields the existence of non-constant, positive $\mu$-harmonic functions with respect to such a measure $\mu$. This evidently extends the result of \cite{AK}. It is also worth to mention that the recent (relatively to the current version of the paper) \cite[Corollary 1.10]{MST} is also a special case of Theorem \ref{discrete theorem} and its proof is verbatim the proof of Theorem \ref{discrete theorem} of the original version of the paper under the additional assumption that $\mu$ has superexponential moment. It should also be noticed that \cite[Corollary 1.10]{MST} does not seem to contribute to the study of covering spaces, since superexponential moment has not been established for measures arising from the Lyons-Sullivan discretization under any assumption (besides the trivial case where the covering space is compact), and as a matter of fact, there are examples illustrating that this does not hold in general.

It is quite evident that Theorem \ref{theorem} is more general than the result of \cite{BBE, BE}. From \cite[Theorem 3]{LS}, it remains to investigate the strong Liouville property on $M$, in the case where $\Gamma$ is amenable. According to \cite{Gu}, a linear amenable group is virtually polycyclic. Therefore, from \cite{BBE, BE}, we obtain a characterization for the strong Liouville property on $M$, if $\Gamma$ is virtually polycyclic. On the other hand, Theorem \ref{theorem} yields the following more
general characterization, if $\Gamma$ is solvable (or elementary amenable). This follows from a result of Milnor \cite{M2}, according to which any finitely generated solvable group has either polynomial or exponential growth. The corresponding statement for elementary amenable groups has been proved in \cite[Theorem 3.2]{Ch}.

\begin{corollary}
Let $p \colon M \to N$ be a normal Riemannian covering of a closed manifold, with solvable (or more generally, elementary amenable) deck transformation group $\Gamma$. Then $\Gamma$ has exponential growth if and only if there are non-constant, positive harmonic functions on $M$.
\end{corollary}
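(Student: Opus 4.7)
The plan is to treat the two implications separately; both reduce to citing results already in hand once the growth dichotomy for the class of groups under consideration is invoked.

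For the forward implication, suppose $\Gamma$ has exponential growth. This is precisely the hypothesis of Theorem \ref{theorem}, so the existence of non-constant, positive harmonic functions on $M$ follows immediately. No further argument is required for this direction, since neither solvability nor elementary amenability enter the conclusion of the theorem.

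For the reverse implication, I would argue by contraposition: assuming $\Gamma$ does not have exponential growth, I want to show every positive harmonic function on $M$ is constant. Since $N$ is closed and $p$ is a normal covering, $\Gamma$ is finitely generated. Now apply the growth dichotomy in the relevant class: Milnor's theorem \cite{M2} shows that a finitely generated solvable group has either polynomial or exponential growth, and \cite[Theorem 3.2]{Ch} extends this dichotomy to finitely generated elementary amenable groups. Since we have excluded exponential growth, $\Gamma$ must have polynomial growth. Then Gromov's theorem \cite{Gromov} yields that $\Gamma$ is virtually nilpotent, and finally Lyons--Sullivan \cite[Theorem 1]{LS} implies that every positive harmonic function on $M$ is constant, completing the contrapositive.

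There is essentially no technical obstacle here, since the corollary is a compilation of the existence result (Theorem \ref{theorem}) together with the classical nonexistence result of Lyons--Sullivan, with the growth dichotomies of Milnor and Chou bridging the gap. The only point that deserves a brief comment in the write-up is why the finite generation of $\Gamma$ is automatic, which follows from compactness of $N$ via a standard Schwarz-type argument (any compact fundamental domain, together with the finitely many deck transformations that translate it to meet its neighbors, generates $\Gamma$).
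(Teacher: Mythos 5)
Your proposal is correct and matches the paper's own (implicit) argument exactly: the forward direction is Theorem \ref{theorem}, and the converse follows from the Milnor--Chou growth dichotomy, Gromov's polynomial-growth theorem, and \cite[Theorem 1]{LS}. The remark on finite generation of $\Gamma$ is a sensible addition but not a point of divergence.
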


From Theorem \ref{theorem} and \cite[Theorem 1]{LS}, it is obvious that what is left open is the existence or not of non-constant, positive harmonic functions on $M$, in the case where $\Gamma$ is of intermediate growth; that is, $\Gamma$ has superpolynomial and subexponential growth. It is well known that there exist finitely generated groups of intermediate growth, which implies that there exist normal coverings of closed manifolds with such deck transformation groups.

Due to its relation with the fundamental group of the base manifold, the universal
covering space is of particular interest in results of this type. A major open problem in
group theory is the existence of finitely presentable groups of intermediate growth, and
some experts of the field believe that there are no such groups (cf. \cite{GP}). Equivalently, it is not known if there exists a closed manifold such that its universal covering space is of intermediate volume growth.

\medskip

\noindent\textbf{Acknowledgements:} I am deeply grateful to Anna Erschler for pointing out a gap in the previous version of the paper, which led to the reformulation of Theorem \ref{discrete theorem}. I would like to thank Werner Ballmann for some very helpful discussions and remarks. I would also like to thank the Max Planck Institute for Mathematics in Bonn for its support and hospitality. I am also grateful to the referee for some helpful suggestions.

\section{Positive harmonic functions on groups}

We begin with some standard facts about random walks on groups (cf. for instance \cite{MR2408585} and the references therein). Let $\Gamma$ be a finitely generated group and $\mu$ a probability measure on $\Gamma$. We assume throughout that $\mu$ is \textit{non-degenerate}; that is, the semigroup generated by $\supp \mu$ coincides with $\Gamma$. A function $f \colon \Gamma \to \mathbb{R}$ is called \textit{$\mu$-harmonic} if
\[
f(x) = \sum_{y \in \Gamma} \mu_{x}(y) f(y)
\]
for any $x \in \Gamma$, where $\mu_{x}(y) := \mu(x^{-1}y)$.

Suppose now that $\Gamma$ is not finite, or a finite extension of $\mathbb{Z}$, or a finite extension of $\mathbb{Z}^{2}$. Then the random walk induced by $\mu$ is transient, which means that the \textit{Green function}
\[
g(x,y) := \sum_{k = 0}^{\infty} \mu_{x}^{k}(y)
\]
is finite for any $x, y \in \Gamma$, where $\mu_{x}^{k}$ stands for the $k$-fold convolution of $\mu_{x}$ with itself, $k \in \mathbb{N}$, and $\mu_{x}^{0}$ is the Dirac measure $\delta_{x}$. It should be noticed that $g(hx,hy) = g(x,y)$ for any $x,y,h \in \Gamma$. The \textit{Green metric} on $\Gamma$ is defined as
\[
d_{g}(x,y) := \ln g(e,e) - \ln g(x,y)
\]
for any $x,y \in \Gamma$, where $e$ is the neutral element of $\Gamma$. It is worth to point out that since we do not assume that $\mu$ is symmetric, the Green metric may not be symmetric. For $r > 0$, consider the ball 
\[
B_{g}(r) := \{ x \in \Gamma : d_{g}(e,x) \leq r \}.
\] 
We know from \cite[Proposition 3.1]{MR2408585} that if $\Gamma$ has superpolynomial growth (with respect to the word metric), then its exponential growth with respect to the Green metric is bounded by
\begin{equation}\label{bound}
\limsup_{r \rightarrow +\infty} \frac{1}{r} \ln |B_{g}(r)| \leq 1.
\end{equation}

Given $y \in \Gamma$ the \textit{Martin kernel} with origin $y$ is defined by
\[
K_{y}(x,z) := \frac{g(x,z)}{g(y,z)}
\]
for any $x,z \in \Gamma$. Recall that any diverging sequence $(z_{n})_{n \in \mathbb{N}}$ in $\Gamma$ has a subsequence $(z_{n_{k}})_{k \in \mathbb{N}}$ such that $K_{y}( \cdot ,z_{n_{k}}) \rightarrow f$ pointwise as $k \rightarrow +\infty$, for some positive $\mu$-superharmonic function $f$ on $\Gamma$. It is evident that any such limit function $f$ satisfies $f(y)=1$.

It is easily checked that if the Martin boundary of $\Gamma$ with respect to $\mu$ is trivial (that is, it consists of only one point $\xi$), then the corresponding Martin kernel $K_y(\cdot,\xi)$ is the constant function $1$. Indeed, in this case, the Poisson boundary is also trivial (see for example \cite[Theorem 7.61]{W}), which means that the constant function $1$ is minimal and hence, corresponds to a point of the Martin boundary (cf. for instance \cite[Theorem 7.50]{W}).
Thus, the Martin boundary of $\Gamma$ with respect to $\mu$ is trivial if and only if any diverging sequence $(z_{n})_{n \in \mathbb{N}}$ in $\Gamma$ has a subsequence $(z_{n_{k}})_{k \in \mathbb{N}}$ such that $K_{y}( \cdot ,z_{n_{k}}) \rightarrow 1$ pointwise as $k \rightarrow +\infty$.

For a finite subset $S$ of $\Gamma$ and $x,y \in \Gamma$, consider the quantity
\[
G(S; x,y) := \sup_{z} |K_{y}(x,z) - 1|,
\]
where the supremum is taken over all $z \in \Gamma \smallsetminus S$.

\begin{proposition}\label{aux}
If the Martin boundary of $\Gamma$ with respect to $\mu$ is trivial, then for any exhausting sequence $(S_{n})_{n \in \mathbb{N}}$ of $\Gamma$ and $x,y \in \Gamma$, we have that $G(S_{n} ; x,y) \rightarrow 0$ as $n \rightarrow + \infty$.
\end{proposition}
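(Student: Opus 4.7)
The plan is to mirror the proof of Proposition \ref{prop}, replacing the role of the normalized exit measure $d\varepsilon_{x}^{K_{n}}/d\varepsilon_{y}^{K_{n}}$ by the Martin kernel $K_{y}(x,\cdot)$, and replacing the Harnack/compactness passage to a harmonic limit by the Martin compactness fact recalled immediately above the statement.

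Suppose, for contradiction, that the conclusion fails. Then there exist $x, y \in \Gamma$, an exhausting sequence $(S_{n})_{n \in \mathbb{N}}$ of $\Gamma$, and a constant $c > 0$ such that, after passing to a subsequence, we can select $z_{n} \in \Gamma \smallsetminus S_{n}$ with $|K_{y}(x, z_{n}) - 1| \geq c$ for every $n$. Since $(S_{n})$ is exhausting, any finite subset of $\Gamma$ is eventually contained in some $S_{n}$, so the requirement $z_{n} \notin S_{n}$ forces $(z_{n})$ to diverge in $\Gamma$.

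Next I would invoke the Martin compactness fact stated in the excerpt: after a further subsequence, $K_{y}(\cdot, z_{n}) \to f$ pointwise for some positive $\mu$-harmonic function $f$ on $\Gamma$. Taking pointwise limits at $x$ and $y$ gives
\[
f(y) = \lim_{n} K_{y}(y,z_{n}) = \lim_{n} \frac{g(y,z_{n})}{g(y,z_{n})} = 1,
\qquad
|f(x) - 1| = \lim_{n} |K_{y}(x, z_{n}) - 1| \geq c,
\]
so $f$ is a non-constant, positive $\mu$-harmonic function on $\Gamma$, contradicting the hypothesis.

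I do not expect any serious obstacle here, since the argument is a direct translation of Proposition \ref{prop}: the only two ingredients needed are the already-noted harmonicity of $K_{y}(\cdot,z)$ (which plays the role of the Poisson-type identity \eqref{harm}) and the recalled sequential precompactness of the family of Martin kernels along diverging sequences (which replaces the Harnack-based local uniform compactness used in the continuous setting). The normalization at $y$ is automatic from $K_{y}(y,z) = 1$, so no additional Harnack argument is required to secure a nontrivial limit.
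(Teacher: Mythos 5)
Your proposal is correct and follows essentially the same argument as the paper: negate the conclusion to extract a diverging sequence $(z_{n})$ with $|K_{y}(x,z_{n})-1|\geq c$, pass to a subsequence along which the Martin kernels converge pointwise to a positive $\mu$-harmonic $f$, and observe $f(y)=1$ while $|f(x)-1|\geq c$, contradicting the Liouville hypothesis. The only difference is that you spell out why $z_{n}\notin S_{n}$ forces divergence, which the paper leaves implicit.
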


\begin{proof}
Assume to the contrary that there exists an exhausting sequence $(S_{n})_{n \in \mathbb{N}}$ of $\Gamma$, $x,y \in \Gamma$, $z_{n} \in \Gamma \smallsetminus S_{n}$ and $c>0$, such that $|K_{y}(x,z_{n}) - 1| \geq c$ for any $n \in \mathbb{N}$. Since $(z_{n})_{n \in \mathbb{N}}$ is diverging, it follows that after passing to a subsequence, if necessary, we may assume that $K_{y}(\cdot,z_{n}) \rightarrow f$ pointwise as $n \rightarrow + \infty$, for some positive $\mu$-superharmonic function $f$ on $\Gamma$. Bearing in mind that $f(y) = 1$, while
\[
|f(x) - 1| = \lim_{n} |K_{y}(x,z_{n}) - 1| \geq c > 0,
\]
we conclude that $f$ is non-constant. Evidently, there is no subsequence $(z_{n_k})_{k \in \mathbb{N}}$ such that $K_y(\cdot,z_{n_k}) \rightarrow 1$ pointwise as $k\rightarrow+\infty$, which is a contradiction. \qed
\end{proof}\medskip

\noindent{\emph{Proof of Theorem \ref{discrete theorem}}:}
Assume to the contrary that there exists a non-degenerate probability measure $\mu$ on $\Gamma$ such that the Martin boundary of $\Gamma$ with respect to $\mu$ is trivial. Let $F$ be a finite, symmetric generating set of $\Gamma$, and denote by $W_{n}$ the set of words of length at most $n$ with respect to $F$, $n \in \mathbb{N}$. Taking into account that $F$ is finite, we derive from Proposition \ref{aux} that for any $\delta > 0$ there exists $n_{0} \in \mathbb{N}$ such that
\[
G(W_{n_{0}}; e, h) < \delta
\]
for any $h \in F$.

Fix $n > n_{0}$ and a word $x = h_{1} \dots h_{n}$ of length $n$ with respect to $F$, where $h_{i} \in F$ for any $1\leq i \leq n$. Setting $x_{i} := h_{1} \dots h_{i}$ and $x_{0} := e$, it is clear that
\begin{equation}\label{green}
g(e,x) = \prod_{i=0}^{n-1} \frac{g(x_{i},x)}{g(x_{i+1},x)} g(x,x) = \prod_{i=0}^{n-1} K_{x_{i+1}}(x_{i},x) g(x,x) = \prod_{i=0}^{n-1} K_{h_{i+1}}(e , x_{i}^{-1} x) g(e,e),
\end{equation}
where we used the left-invariance of the Green function.
For $0 \leq i < n - n_{0}$, it is immediate to verify that
\[
| K_{h_{i+1}}(e,x_{i}^{-1}x) - 1 | \leq G(W_{n_{0}};e,h_{i+1}) < \delta,
\]
since $x_{i}^{-1}x \notin W_{n_{0}}$ and $h_{i+1} \in F$. This yields that $K_{h_{i+1}}(e, x_{i}^{-1}x) \geq 1 - \delta \text{ for } 0 \leq i < n - n_{0}$. Consider the positive constant
\[
c := \min_{h \in F} \min_{z \in W_{n_{0}}} K_{h}(e,z).
\]
For $n - n_{0} \leq i \leq n - 1$, using that $x_{i}^{-1}x \in W_{n_{0}}$ and $h_{i+1} \in F$, it is easy to see that $K_{h_{i+1}}(e,x_{i}^{-1}x) \geq c$.
Then (\ref{green}) gives the estimate
\[
g(e,x) \geq (1 - \delta)^{n-n_{0}} c^{n_{0}} g(e,e),
\]
which implies that
\[
d_{g}(e,x) \leq -(n-n_{0}) \ln (1 - \delta) - n_{0} \ln c =: r(n)
\]
for any $n > n_{0}$ and any word $x$ of length $n$ with respect to $F$. Hence, we deduce that
\[
W_{n} \smallsetminus W_{n_{0}} \subset B_{g}(r(n))
\]
for any $n > n_{0}$. Bearing in mind this and (\ref{bound}), we conclude that
\[
\limsup_{n} \frac{1}{n} \ln |W_{n}| = \limsup_{n} \frac{1}{n} \ln |W_{n} \smallsetminus W_{n_{0}}|  \leq \limsup_{n} \frac{1}{n} \ln |B_{g}(r(n))| \leq - \ln(1 - \delta),
\]
which is a contradiction, because $\Gamma$ has exponential growth and $\delta > 0$ is arbitrary. \qed \medskip

It is worth to mention that the arguments of the preceding proof are valid also in the case where $\Gamma$ has intermediate growth or polynomial growth of degree at least three (exploiting again \cite[Proposition 3.1]{MR2408585}). However, the penultimate line of the proof does not lead to a contradiction unless $\Gamma$ has exponential growth.

\section{Applications to Riemannian coverings}

We now discuss some applications of Theorem \ref{discrete theorem} to Riemannian coverings. Let $N$ be a Riemannian manifold and denote by $p(t,x,y)$ the kernel of the minimal heat semigroup on $N$. We say that $\lambda \in \mathbb{R}$ belongs to the \textit{Green's region} of $N$ if
\[
\int_{0}^{+\infty} e^{\lambda t} p(t,x,y) dt < + \infty
\]
for some (and then any) $x \neq y$ in $N$. We know from \cite[Theorem 2.6]{S} that the Green's region is $(- \infty , \lambda_{0}(N) )$ or $(- \infty , \lambda_{0}(N)]$, where $\lambda_{0}(N)$ stands for the bottom of the spectrum of the Laplacian on $N$. In the first case, $N$ is called \textit{$\lambda_{0}(N)$-recurrent}, and positive $\lambda_{0}(N)$-harmonic functions on $N$ are constant multiples of one another (cf. \cite[Theorem 2.7]{S}). According to \cite[Theorem 2.8]{S}, if $\lambda_{0}(N)$ is an eigenvalue of the Friedrichs extension of the Laplacian on $N$, then $N$ is $\lambda_{0}(N)$-recurrent.

\begin{theorem}\label{general theorem}
Let $p \colon M \to N$ be a normal Riemannian covering of a $\lambda_{0}(N)$-recurrent manifold $N$. If the deck transformation group of the covering is finitely generated and has exponential growth, then there are positive $\lambda_{0}(N)$-harmonic functions on $M$ that do not descend to $N$.
\end{theorem}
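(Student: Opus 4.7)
The plan is to reduce, via a ground-state (Doob) transform, the question of positive $\lambda_{0}(N)$-harmonic functions on $M$ that do not descend to $N$ to the discrete statement of Theorem \ref{discrete theorem}. Since $N$ is $\lambda_{0}(N)$-recurrent, there is a positive $\lambda_{0}(N)$-harmonic function $\phi$ on $N$, unique up to scaling. Its lift $\tilde{\phi} := \phi \circ p$ is a $\Gamma$-invariant positive $\lambda_{0}(N)$-harmonic function on $M$, and the positive $\lambda_{0}(N)$-harmonic functions on $M$ that descend to $N$ are exactly the constant multiples of $\tilde{\phi}$. So it suffices to produce a positive $\lambda_{0}(N)$-harmonic function on $M$ that is not proportional to $\tilde{\phi}$.

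I would then apply the $h$-transform with $h = \tilde{\phi}$. The map $u \mapsto u/\tilde{\phi}$ is a bijection between positive $\lambda_{0}(N)$-harmonic functions on $M$ and positive $L^{\tilde{\phi}}$-harmonic functions on $M$, where
\[
L^{\tilde{\phi}} f := \frac{1}{\tilde{\phi}}(\Delta + \lambda_{0}(N))(\tilde{\phi} f) = \Delta f + 2 \langle \nabla \ln \tilde{\phi}, \nabla f \rangle.
\]
Under this bijection constants go to $\tilde{\phi}$, so it is enough to find a non-constant positive $L^{\tilde{\phi}}$-harmonic function on $M$. The operator $L^{\tilde{\phi}}$ is $\Gamma$-invariant and generates a stochastically complete diffusion on $M$ whose projection to $N$ is recurrent -- this is the probabilistic content of $\lambda_{0}(N)$-recurrence. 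In this setting the Lyons-Sullivan discretization of \cite{LS, BL, BP}, applied to the $h$-transformed diffusion in place of Brownian motion, produces a non-degenerate probability measure $\mu$ on $\Gamma$ (with $\supp \mu = \Gamma$), together with a bijection between positive $L^{\tilde{\phi}}$-harmonic functions on $M$ and positive $\mu$-harmonic functions on $\Gamma$, under which constants correspond to constants. Applying Theorem \ref{discrete theorem} to $\mu$ then gives a non-constant positive $\mu$-harmonic function on $\Gamma$, and pulling it back through the discretization and the Doob transform yields the desired positive $\lambda_{0}(N)$-harmonic function on $M$ that is not a constant multiple of $\tilde{\phi}$.

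The main obstacle is the middle step: verifying that the Lyons-Sullivan construction, developed in \cite{LS, BL, BP} for Brownian motion on a covering of a closed manifold, extends to the $h$-transformed diffusion $L^{\tilde{\phi}}$ on $M$ and still produces a non-degenerate measure on $\Gamma$ with the bijective correspondence on positive harmonic functions. The essential inputs -- $\Gamma$-invariance of $L^{\tilde{\phi}}$, recurrence of the quotient diffusion on $N$ (guaranteed by $\lambda_{0}(N)$-recurrence), and the Harnack-type estimates needed to force $\supp \mu = \Gamma$ -- all survive the ground-state transformation, so this extension should go through without essential difficulty.
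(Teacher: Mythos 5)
Your proposal is correct and takes essentially the same route as the paper: a ground-state (Doob) transform by the lift $\tilde{\varphi}$ of the positive $\lambda_{0}(N)$-harmonic function on $N$, followed by the Lyons-Sullivan discretization of the resulting $\Gamma$-invariant diffusion and an application of Theorem \ref{discrete theorem}; the step you flag as the main obstacle is exactly what the paper delegates to \cite{BP}*{Theorem D}, using that $\lambda_{0}(N)$-recurrence of $N$ gives recurrence of the transformed diffusion on $N$. The only differences are cosmetic (a sign convention in the drift term, and that only the extension direction of the discretization, from $\mu$-harmonic functions on $\Gamma$ to $L$-harmonic functions on $M$, is actually needed rather than a full bijection).
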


\begin{proof}
Let $\varphi$ be a positive $\lambda_{0}(N)$-harmonic function on $N$, denote by $\tilde{\varphi}$ its lift to $M$, and identify the deck transformation group $\Gamma$ with a fiber of the covering. Consider the operators $L_{N} := \Delta - 2 \grad \ln \varphi$ and $L_{M} := \Delta - 2 \grad \ln \tilde{\varphi}$ on $N$ and $M$, respectively.
As explained in \cite[Subsection 1.4]{BP}, the assumption that $N$ is $\lambda_{0}(N)$-recurrent yields that $N$ is $L_{N}$-recurrent (in the terminology of \cite{BP}). Denoting by $\mu$ the probability measure on $\Gamma$ arising from the Lyons-Sullivan discretization with appropriate choice of LS-data in the sense of \cite{BP}, we derive from Theorem \ref{discrete theorem} that the Martin boundary of $\Gamma$ with respect to $\mu$ is non-trivial.
It follows from \cite{BP,BP2} that the Martin boundary $\partial_{L_{M}} M$ is also non-trivial. This yields that there exist positive $L_M$-harmonic functions on $M$ that do no descend to $N$. Indeed, otherwise, any positive $L_M$-harmonic function on $M$ would descend to a positive $L_N$-harmonic function on $N$, which has to be constant, $N$ being $L_N$-recurrent, and this would imply that $\partial_{L_M} M $ is trivial. Therefore, there exists a positive $L_{M}$-harmonic function $f \in C^\infty(M)$ which does not descend to $N$. It is straightforward to verify that $f \tilde{\varphi}$ is $\lambda_{0}(N)$-harmonic and does not descend to $N$.\qed
\end{proof}\medskip

A particular case of Theorem \ref{general theorem} is the following extension of Theorem \ref{theorem} that involves coverings of complete manifolds with finite volume. It should be noticed that if $N$ is a complete manifold of finite volume, then $\lambda_{0}(N) = 0$ is an eigenvalue of the Friedrichs extension of the Laplacian on $N$ (constant functions are eigenfunctions of the operator).

\begin{corollary}
Let $p \colon M \to N$ be a normal Riemannian covering of a complete manifold $N$ with finite volume. If the deck transformation group of the covering is finitely generated and has exponential growth, then $M$ admits non-constant, positive harmonic functions.
\end{corollary}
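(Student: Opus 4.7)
The plan is to reduce the Corollary directly to Theorem \ref{general theorem}. Since the finite-volume, completeness, exponential-growth hypotheses are exactly what that theorem needs once we have $\lambda_{0}(N)$-recurrence, the entire proof amounts to verifying recurrence plus a short argument that a non-descending function is non-constant.

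First I would observe that because $N$ is complete with $\vol(N) < + \infty$, the constant function $1$ lies in $L^{2}(N)$ and satisfies $\Delta 1 = 0$; in particular $0$ is an eigenvalue of the Friedrichs extension of $\Delta$ on $N$, and nonnegativity of $\Delta$ on $L^{2}(N)$ forces $\lambda_{0}(N) = 0$. Invoking \cite[Theorem 2.8]{S}, which is stated in the paragraph preceding Theorem \ref{general theorem}, we conclude that $N$ is $\lambda_{0}(N)$-recurrent.

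Next I would apply Theorem \ref{general theorem} to this setup. Its hypotheses are now met: $N$ is $\lambda_{0}(N)$-recurrent, and the deck transformation group $\Gamma$ is finitely generated with exponential growth. The theorem yields a positive $\lambda_{0}(N)$-harmonic function on $M$ that does not descend to $N$. Since $\lambda_{0}(N) = 0$, this function is in fact a positive harmonic function on $M$ in the usual sense.

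Finally I would rule out the possibility that this function is constant: any constant function on $M$ is the lift of a constant function on $N$, hence descends, contradicting the non-descending conclusion of Theorem \ref{general theorem}. Therefore $M$ admits a non-constant, positive harmonic function, as claimed. The only conceivable obstacle is the identification $\lambda_{0}(N) = 0$ under merely finite volume and completeness, but this is immediate from $1 \in L^{2}(N)$, so there is no real difficulty beyond a careful citation of the ingredients already in place.
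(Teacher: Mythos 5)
Your proposal is correct and follows exactly the route the paper intends: finite volume gives $1 \in L^{2}(N)$, hence $\lambda_{0}(N)=0$ is an eigenvalue of the Friedrichs extension, so $N$ is $\lambda_{0}(N)$-recurrent by \cite[Theorem 2.8]{S}, and Theorem \ref{general theorem} then produces a positive harmonic function on $M$ that does not descend and is therefore non-constant. This matches the remark the paper makes immediately before the corollary, so there is nothing to add.
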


\begin{bibdiv}
\begin{biblist}

\bib{AK}{article}{
   author={Amir, G.},
   author={Kozma, G.}
   title={Every exponential group supports a positive harmonic function},
   journal={Proc. Am. Math. Soc., to appear},
}

\bib{BBE}{article}{
	author={Babillot, M.},
	author={Bougerol, P.},
	author={Elie, L.},
	title={On the strong Liouville property for co-compact Riemannian covers},
	journal={Rend. Sem. Mat. Fis. Milano},
	volume={64},
	date={1994},
	pages={77--84 (1996)},
	issn={0370-7377},
}

\bib{BL}{article}{
   author={Ballmann, W.},
   author={Ledrappier, F.},
   title={Discretization of positive harmonic functions on Riemannian
   manifolds and Martin boundary},
   conference={
      title={Actes de la Table Ronde de G\'{e}om\'{e}trie Diff\'{e}rentielle},
      address={Luminy},
      date={1992},
   },
   book={
      series={S\'{e}min. Congr.},
      volume={1},
      publisher={Soc. Math. France, Paris},
   },
   date={1996},
   pages={77--92},
}

\bib{BP}{article}{
	author={Ballmann, W.},
	author={Polymerakis, P.},
	title={Equivariant discretizations of diffusions, random walks, and
		harmonic functions},
	journal={Enseign. Math.},
	volume={67},
	date={2021},
	number={3-4},
	pages={331--367},
}

\bib{BP2}{article}{
	author={Ballmann, W.},
	author={Polymerakis, P.},
	title={Equivariant discretizations of diffusions, random walks, and
		harmonic functions: Corrections and additions},
	journal={to appear},
}

\bib{MR2408585}{article}{
	author={Blach\`ere, S.},
	author={Ha\"{\i}ssinsky, P.},
	author={Mathieu, P.},
	title={Asymptotic entropy and Green speed for random walks on countable
		groups},
	journal={Ann. Probab.},
	volume={36},
	date={2008},
	number={3},
	pages={1134--1152},
	issn={0091-1798},
}

\bib{BE}{article}{
   author={Bougerol, P.},
   author={Elie, L.},
   title={Existence of positive harmonic functions on groups and on covering
   manifolds},
   journal={Ann. Inst. H. Poincar\'{e} Probab. Statist.},
   volume={31},
   date={1995},
   number={1},
   pages={59--80},
}

\bib{Brooks}{article}{
   author={Brooks, R.},
   title={The fundamental group and the spectrum of the Laplacian},
   journal={Comment. Math. Helv.},
   volume={56},
   date={1981},
   number={4},
   pages={581--598},
}

\bib{Ch}{article}{
	author={Chou, C.},
	title={Elementary amenable groups},
	journal={Illinois J. Math.},
	volume={24},
	date={1980},
	number={3},
	pages={396--407},
	issn={0019-2082},
}

\bib{Erschler}{article}{
   author={Erschler, A.},
   title={Liouville property for groups and manifolds},
   journal={Invent. Math.},
   volume={155},
   date={2004},
   number={1},
   pages={55--80},
}

\bib{GGPY}{article}{
	author={Gekhtman, I.},
	author={Gerasimov, V.},
	author={Potyagailo, L.},
	author={Yang, W.},
	title={Martin boundary covers Floyd boundary},
	journal={Invent. Math.},
	volume={223},
	date={2021},
	number={2},
	pages={759--809},
}

\bib{GP}{article}{
	author={Grigorchuk, R.},
	author={Pak, I.},
	title={Groups of intermediate growth: an introduction},
	journal={Enseign. Math. (2)},
	volume={54},
	date={2008},
	number={3-4},
	pages={251--272},
	issn={0013-8584},
}

\bib{Gromov}{article}{
	author={Gromov, M.},
	title={Groups of polynomial growth and expanding maps},
	journal={Inst. Hautes \'{E}tudes Sci. Publ. Math.},
	number={53},
	date={1981},
	pages={53--73},
	issn={0073-8301},
}

\bib{Gu}{article}{
	author={Guivarc'h, Y.},
	title={Croissance polynomiale et p\'{e}riodes des fonctions harmoniques},
	language={French},
	journal={Bull. Soc. Math. France},
	volume={101},
	date={1973},
	pages={333--379},
	issn={0037-9484},
}

\bib{K}{article}{
   author={Kaimanovich, V. A.},
   title={Brownian motion and harmonic functions on covering manifolds. An
   entropic approach},
   language={Russian},
   journal={Dokl. Akad. Nauk SSSR},
   volume={288},
   date={1986},
   number={5},
   pages={1045--1049},
}

\bib{LS}{article}{
   author={Lyons, T.},
   author={Sullivan, D.},
   title={Function theory, random paths and covering spaces},
   journal={J. Differential Geom.},
   volume={19},
   date={1984},
   number={2},
   pages={299--323},
}

\bib{M1}{article}{
	author={Milnor, J.},
	title={A note on curvature and fundamental group},
	journal={J. Differential Geometry},
	volume={2},
	date={1968},
	pages={1--7},
	issn={0022-040X},
}

\bib{M2}{article}{
	author={Milnor, J.},
	title={Growth of finitely generated solvable groups},
	journal={J. Differential Geometry},
	volume={2},
	date={1968},
	pages={447--449},
	issn={0022-040X},
}

\bib{MST}{article}{
	author={Mukherjee, M.},
	author={Samanta, S.},
	author={Thandar, S.},
	title={Finitely generated groups and harmonic functions of slow growth},
	journal={preprint, https://arxiv.org/abs/2405.07688},
}

\bib{S}{article}{
   author={Sullivan, D.},
   title={Related aspects of positivity in Riemannian geometry},
   journal={J. Differential Geom.},
   volume={25},
   date={1987},
   number={3},
   pages={327--351},
}

\bib{W}{book}{
	author={Woess, W.},
	title={Denumerable Markov chains},
	series={EMS Textbooks in Mathematics},
	subtitle={Generating functions, boundary theory, random walks on trees},
	publisher={European Mathematical Society (EMS), Z\"{u}rich},
	date={2009},
	pages={xviii+351},
	isbn={978-3-03719-071-5},
}

\end{biblist}
\end{bibdiv}

\noindent Department of Mathematics, University of Thessaly\\
3rd km Old National Road Lamia – Athens, 35100, Lamia, Greece\\
E-mail address: ppolymerakis@uth.gr

\end{document}